\newtheorem{theorem}{Theorem}
\newtheorem{conjecture}[theorem]{Conjecture}
\begin{document}

\title{An Erd\H{o}s--Ko--Rado type theorem for\\ subgraphs of perfect matchings}
\author{D\'{a}niel T. Nagy\footnote{HUN-REN Alfr\'ed R\'enyi Institute of Mathematics, Budapest, Hungary. dani.t.nagy@gmail.com\\ Research supported by the National Research, Development and Innovation Office - NKFIH under the grants FK 132060 and PD 137779 and by the J\'anos Bolyai Research Scholarship of the Hungarian Academy of Sciences.}}
\maketitle

\begin{abstract}
Let $M_k$ be a $2n$-vertex graph with $n$ pairwise disjoint edges and let $\mathcal{H}^{(p,s)}(n)$ be the family of subsets of $V(M_n)$ that span exactly $p$ edges and $s$ isolated vertices. We prove that for $n\ge 2p+s$ this family has the Erd\H{o}s--Ko--Rado property: the size of the largest intersecting family equals to the number of sets containing a fixed vertex. The bound $n\ge 2p+s$ is the best possible, improving a recent theorem with $n\ge 2p+2s$ by Fuentes and Kamat.
\end{abstract}

\section{Introduction}

We say that a set system (or family) is {\it intersecting}, if any two of its members has a nonempty intersection.  The Erd\H{o}s--Ko--Rado theorem is a fundamental result in the theory of extremal set systems.

\begin{theorem}[Erd\H{o}s, Ko, Rado (1961) \cite{ekr1961}] \label{ekrthm}
Let $k$ and $n$ be positive integers such that $2k\le n$. If $\mathcal{F}$ is an intersecting family formed by some $k$-element subsets of a given $n$-element set, then
$$|\mathcal{F}|\le \binom{n-1}{k-1}.$$
\end{theorem}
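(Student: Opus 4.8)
The plan is to prove Theorem~\ref{ekrthm} by Katona's cyclic permutation method; besides being the shortest self‑contained argument, its central lemma is the natural prototype for the intersecting‑arc lemma that the matching version presumably needs later.

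Fix a cyclic ordering of the $n$-element ground set, i.e.\ place its elements around a circle, and call a $k$-subset an \emph{arc} of this ordering if its elements occupy $k$ consecutive positions. The key local claim is: in any fixed cyclic ordering, an intersecting family $\mathcal{F}$ of $k$-sets contains at most $k$ arcs. To prove it, assume $\mathcal{F}$ contains at least one arc $A$, say occupying positions $1,\dots,k$. A short check shows that the only arcs meeting $A$, besides $A$ itself, are the arcs $B_2,\dots,B_k$ starting at positions $2,\dots,k$ and the arcs $C_2,\dots,C_k$ ending at positions $1,\dots,k-1$ (any arc that avoids both position $1$ and position $k$ is, because its length $k$ does not exceed $n/2$, forced to stay off $A$ entirely). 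For each $i$, the arcs $B_i$ and $C_i$ together occupy $2k$ consecutive positions, so they are disjoint — this is the one place the hypothesis $2k\le n$ is used — and hence at most one of them lies in $\mathcal{F}$. Therefore $\mathcal{F}$ contains at most $1+(k-1)=k$ arcs.

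Finally I would double count the pairs $(\pi,A)$ where $\pi$ is a cyclic ordering and $A\in\mathcal{F}$ is an arc of $\pi$. There are $(n-1)!$ cyclic orderings, each contributing at most $k$ such pairs by the claim, so the total is at most $k\,(n-1)!$. On the other hand, a fixed $k$-set is an arc in exactly $k!\,(n-k)!$ cyclic orderings (glue the set into one block, arrange the block and the $n-k$ remaining elements around the circle in $(n-k)!$ ways, and order the set internally in $k!$ ways), so the total also equals $|\mathcal{F}|\cdot k!\,(n-k)!$. Combining, $|\mathcal{F}|\cdot k!\,(n-k)!\le k\,(n-1)!$, which rearranges to $|\mathcal{F}|\le\binom{n-1}{k-1}$.

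The only genuine obstacle is the local claim, and inside it the bookkeeping that the arcs meeting $A$ are precisely the $B_i$'s, the $C_i$'s and $A$, and that $2k\le n$ is exactly what makes each pair $B_i,C_i$ disjoint; the double‑counting step is routine. One could instead run the compression/shifting proof, but the circle method keeps the role of the hypothesis $2k\le n$ the most transparent and is closest in spirit to what the perfect‑matching analogue will require.
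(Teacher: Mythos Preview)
Your argument is correct: it is precisely Katona's cycle method \cite{k1972}, and the bookkeeping (the arcs meeting $A$ are exactly $A$, the $B_i$, the $C_i$; each pair $B_i,C_i$ is disjoint when $2k\le n$; the double count with $k!\,(n-k)!$ versus $(n-1)!$) is all in order.

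As for comparison with the paper: the paper does not actually give a proof of Theorem~\ref{ekrthm}; it is stated as the classical result of \cite{ekr1961}, with Katona's proof \cite{k1972} mentioned by reference only. So there is no in-paper proof to compare against. That said, your choice of method aligns perfectly with the paper's philosophy: the proof of the main theorem (Theorem~\ref{mainthm}) is exactly a two-level analogue of what you wrote, replacing the cycle $\mathbb{Z}_n$ by the double cycle $D_n$, arcs by ``quasi-intervals'', and the count of cyclic orderings by a count of ``proper mappings''. The local lemma there (at most $p+k$, respectively $2p+s$, intersecting quasi-intervals) is proved by the same ``pair off the candidates into disjoint pairs'' trick you used for $B_i$ and $C_i$, and the averaging/double-counting step is identical in structure. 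So your write-up is not merely correct but is the right warm-up for what follows.
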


The bound in the above theorem is tight, shown by the family of all $k$-element sets containing a fixed element. In fact, if $n>2k$ all the extremal constructions are of this form.

For a family $\mathcal{H}$ and element $x$, the {\it star} centered at $x$ is defined as $\mathcal{H}_x:=\{H \in \mathcal{H}~:~x\in H\}$. Note that stars are always intersecting. We say that a family $\mathcal{H}$ has the Erd\H{o}s--Ko--Rado property (or that $\mathcal{H}$ is EKR) if there is an element $x$ such that $|\mathcal{F}|\le |\mathcal{H}_x|$ holds for any intersecting subfamily $\mathcal{F}\subset\mathcal{H}$.

The Erd\H{o}s--Ko--Rado theorem can can generalized in several ways (see \cite{gm2015} or Chapter 2 of \cite{gp2018} for an overview). One of these is determining whether certain families are EKR or not. In this note, we will prove that the family defined below is EKR.

Let $M_n$ be a simple graph with a vertex set $V(M_n)=\{a_1, a_2, \dots, a_n, b_1, b_2,\dots, b_n\}$ and edge set $E(M_n)=\{(a_1,b_1), (a_2,b_2), \dots, (a_n,b_n)\}$. Let $\mathcal{H}^{(p,s)}(n)$ be the family of those subsets of $V(M_n)$ that contain exactly $2p+s$ vertices and span exactly $p$ edges. In other words, they contain $p$ disjoint edges and $s$ isolated vertices. It is easy to see that $|\mathcal{H}^{(p,s)}(n)|=\binom{n}{p}\binom{n-p}{s}2^s$.

In a recent manuscript Fuentes and Kamat made a conjecture about $\mathcal{H}^{(p,s)}(n)$ and proved it for a certain range of parameters.

\begin{conjecture}[Fuentes, Kamat (2024) \cite{fk2024}] \label{fkconj}
For non-negative integers $p,s$ satisfying $1\le 2p+s\le n$, $\mathcal{H}^{(p,s)}(n)$ is EKR.
\end{conjecture}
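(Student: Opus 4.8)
The plan is a combination of a symmetry reduction, ``shifting'' adapted to this setting, and a simultaneous induction on $n$ — for $\mathcal{H}^{(p,s)}(n)$ together with the cross-intersecting statements it needs — anchored at the tight value $n=2p+s$. First the reductions. The automorphism group of $M_n$ (permutations of the $n$ edges together with independent swaps $a_i\leftrightarrow b_i$) acts transitively on $V(M_n)$ and on $\mathcal{H}^{(p,s)}(n)$, so all stars have the same size, and a one-line double count gives
$$|\mathcal{H}^{(p,s)}_x(n)|=\frac{2p+s}{2n}\,|\mathcal{H}^{(p,s)}(n)|=\binom{n-1}{p-1}\binom{n-p}{s}2^{s}+\binom{n-1}{p}\binom{n-1-p}{s-1}2^{s-1}.$$
It is convenient to encode a member as a triple $(P,S,\sigma)$: the set $P$ of $p$ full edges, the set $S$ of $s$ ``half'' edges disjoint from $P$, and a choice $\sigma\colon S\to\{a,b\}$ of endpoint on each half edge; equivalently, identifying each vertex with a half edge, $\mathcal{H}^{(p,s)}(n)$ is the family of those $(2p+s)$-subsets of $V(M_n)$ that contain both endpoints of exactly $p$ edges, with ``intersecting'' being ordinary set intersection. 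A member involves $p+s$ edges and leaves $n-p-s$ untouched, and a disjoint member must place its $p$ full edges among the untouched ones, so a disjoint partner exists precisely when $n\ge 2p+s$ — which is why this is the natural threshold.

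The base case $n=2p+s$ is then immediate: here $n-p-s=p$, so the map $(P,S,\sigma)\mapsto\bigl([n]\setminus(P\cup S),\,S,\,\bar\sigma\bigr)$, where $\bar\sigma$ reverses every choice, is a fixed-point-free involution of $\mathcal{H}^{(p,s)}(n)$ that pairs each member with one disjoint from it (they meet only along $S$, on opposite endpoints). Hence an intersecting family contains at most one member of each pair and has size at most $\tfrac12|\mathcal{H}^{(p,s)}(n)|=|\mathcal{H}^{(p,s)}_x(n)|$; the same involution, run inside the ground family, also serves as the base case for the cross-intersecting inequalities used below.

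For $n>2p+s$ I would argue by induction. First reduce an arbitrary intersecting $\mathcal{F}$ to a shifted one by repeatedly applying two compressions that preserve the parameters $(p,s)$, membership in $\mathcal{H}^{(p,s)}(n)$, the cardinality and intersectingness: an edge-transposition compression that moves heavier occupation (full $\succ$ half $\succ$ empty) toward small-index edges, and a within-edge compression preferring the endpoint $a_i$ on half edges — the only non-intersecting pair of edge-states, ``half via $a_i$'' versus ``half via $b_i$'', is not an intersection to begin with, after which the usual compression bookkeeping goes through. Then split $\mathcal{F}$ by the state of edge $e_n$ into $\mathcal{F}_0$ (empty), $\mathcal{F}_a,\mathcal{F}_b$ (half via $a_n$ resp.\ $b_n$), $\mathcal{F}_2$ (full), and contract $e_n$: the family $\mathcal{F}_0\subseteq\mathcal{H}^{(p,s)}(n-1)$ is still intersecting, while the contractions $\mathcal{G}_2,\mathcal{G}_a,\mathcal{G}_b$ of $\mathcal{F}_2,\mathcal{F}_a,\mathcal{F}_b$ lie in $\mathcal{H}^{(p-1,s)}(n-1)$, $\mathcal{H}^{(p,s-1)}(n-1)$, $\mathcal{H}^{(p,s-1)}(n-1)$ respectively. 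Intersectingness of $\mathcal{F}$ forces $\mathcal{F}_0$ to be cross-intersecting with each of $\mathcal{G}_2,\mathcal{G}_a,\mathcal{G}_b$, and $\mathcal{G}_a$ to be cross-intersecting with $\mathcal{G}_b$; shiftedness (used exactly as in Frankl's proof of the classical theorem, together with $n\ge 2p+s$) is what lets one produce ``free'' edges and upgrade these cross-intersecting facts into sharp numerical bounds. Using the identity $|\mathcal{H}^{(p,s)}_x(n)|=|\mathcal{H}^{(p-1,s)}(n-1)|+|\mathcal{H}^{(p,s-1)}(n-1)|$, the inductive step reduces to
$$|\mathcal{F}_0|+|\mathcal{G}_2|+|\mathcal{G}_a|+|\mathcal{G}_b|\ \le\ |\mathcal{H}^{(p-1,s)}(n-1)|+|\mathcal{H}^{(p,s-1)}(n-1)|.$$

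I expect the main obstacle to be precisely this last inequality, and it is where the improvement from Fuentes and Kamat's range $n\ge 2p+2s$ down to the optimal $n\ge 2p+s$ has to be made. The trivial bounds $|\mathcal{G}_2|\le|\mathcal{H}^{(p-1,s)}(n-1)|$ and $|\mathcal{G}_a|,|\mathcal{G}_b|\le|\mathcal{H}^{(p,s-1)}(n-1)|$ leave no room for $|\mathcal{F}_0|$ once $n$ is large, so no structure may be discarded: one must use that a near-extremal intersecting $\mathcal{F}_0$ is, by an inductive shift-based stability statement, essentially a star, and that cross-intersection with such a star forces $\mathcal{G}_2$ and $\mathcal{G}_a,\mathcal{G}_b$ to shrink by exactly the compensating amounts, uniformly in $n\ge 2p+s$. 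Concretely, I would prove — by the same shifting-plus-induction scheme, in tandem with the EKR statement — the sharp two-family cross-intersecting inequalities for the families $\mathcal{H}^{(p',s')}(m)$ (for instance $|\mathcal{G}_a|+|\mathcal{G}_b|\le|\mathcal{H}^{(p,s-1)}(n-1)|$, whose base case $m=2p+s-1$ is immediate from the involution above), being careful never to forget the signs $\sigma$ of the half edges, since it is exactly the ``cheap'' disjointness furnished by opposite signs on a common half edge that makes $2p+s$, rather than $2p+2s$, the correct threshold; and only then assemble these with the stability of $\mathcal{F}_0$ into the displayed bound.
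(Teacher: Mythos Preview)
Your approach is genuinely different from the paper's. The paper does \emph{not} use shifting or induction at all: it runs a Katona-style cycle argument on a ``double cycle'' $D_n=\mathbb{Z}_n\times\{0,1\}$, defining explicit quasi-intervals of size $2p+s$ (with two slightly different shapes depending on the parity of $s$) whose intersection pattern is that of ordinary intervals on a cycle, so at most $2p+s$ of them can be pairwise intersecting; a double count over edge-respecting bijections $V(M_n)\to D_n$ then gives $|\mathcal{F}|\le\frac{2p+s}{2n}|\mathcal{H}^{(p,s)}(n)|$ in one stroke. The whole proof is under a page, with no auxiliary cross-intersecting lemmas and no stability input.

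By contrast, your proposal is a plan whose decisive step is openly left undone. Everything up to and including the decomposition
\[
|\mathcal{F}_0|+|\mathcal{G}_2|+|\mathcal{G}_a|+|\mathcal{G}_b|\ \le\ |\mathcal{H}^{(p-1,s)}(n-1)|+|\mathcal{H}^{(p,s-1)}(n-1)|
\]
is reasonable (the base case via the involution and the star-size identity are correct), but you yourself flag this inequality as ``the main obstacle'' and then only gesture at how to handle it: an unproved stability statement for $\mathcal{F}_0$, and a family of sharp cross-intersecting inequalities for $\mathcal{H}^{(p',s')}(m)$ to be proved ``in tandem'' with the EKR statement. Neither ingredient is supplied, and the stability route in particular is worrying --- proving that a near-extremal intersecting family is essentially a star is typically at least as hard as EKR itself, so invoking it inside an inductive proof of EKR risks circularity or, at best, replaces one hard statement by a harder one. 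The compressions you describe (a three-level full~$\succ$~half~$\succ$~empty edge-shift together with a within-edge $a_i\!\leftarrow\! b_i$ shift) also need a careful verification that intersectingness survives in this mixed setting; you assert it but do not carry it out. As written, then, this is a sketch that may or may not close, whereas the paper's cycle argument bypasses all of these difficulties entirely.
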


\begin{theorem}[Fuentes, Kamat (2024) \cite{fk2024}] \label{fkthm}
Conjecture \ref{fkconj} is true if $n\ge 2p+2s$, or $n=2p+s$, or $s=1$.
\end{theorem}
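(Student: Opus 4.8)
The plan is to follow the shifting proof of the classical Erd\H{o}s--Ko--Rado theorem, after disposing of the boundary case $n=2p+s$ by an involution. Throughout, encode each $H\in\mathcal H^{(p,s)}(n)$ by recording, for every edge $(a_i,b_i)$, its \emph{type} in $H$: \emph{absent} (neither endpoint lies in $H$), \emph{full} (both do), or one of the two \emph{half} types ($\{a_i\}$ or $\{b_i\}$); thus $H$ has $p$ full, $s$ half, and $n-p-s$ absent types. Since $S_n\ltimes\mathbb{Z}_2^n\le\mathrm{Aut}(M_n)$ acts transitively on $\mathcal H^{(p,s)}(n)$, all stars have the same size, so counting the incidences $(x,H)$ with $x\in H$ gives
\[
|\mathcal H^{(p,s)}(n)_x| = \tfrac{2p+s}{2n}\,|\mathcal H^{(p,s)}(n)| = \binom{n-1}{p-1}\binom{n-p}{s}2^s + \binom{n-1}{p}\binom{n-p-1}{s-1}2^{s-1}.
\]
For $s=0$ the family is $\binom{[n]}{p}$ with ordinary intersection and the hypotheses reduce to $2p\le n$, so Theorem~\ref{ekrthm} applies; hence we may assume $s\ge 1$.

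When $n=2p+s$ the identity above reads $|\mathcal H^{(p,s)}(n)_x|=\tfrac12|\mathcal H^{(p,s)}(n)|$, so it suffices to exhibit a perfect matching in the disjointness graph. Now every member has $p$ full, $s$ half, and $n-p-s=p$ absent types, and the map that interchanges the full and absent types of a member and flips each of its half types ($a_i\leftrightarrow b_i$) is an involution of $\mathcal H^{(p,s)}(n)$; it has no fixed point (as $p\ge 1$ or $s\ge 1$) and sends each $H$ to a set disjoint from it. An intersecting family must omit one member of each of these pairs, hence has size at most $\tfrac12|\mathcal H^{(p,s)}(n)|=|\mathcal H^{(p,s)}(n)_x|$.

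For $n\ge 2p+2s$ (and for $s=1$) I would argue by induction on $n$. Introduce two kinds of compressions on $\mathcal F\subseteq\mathcal H^{(p,s)}(n)$: within-pair flips (turn a half type $\{b_i\}$ into $\{a_i\}$ unless the result is already present) and between-pair shifts $S_{ij}$, $i<j$ (if pair $i$ is absent and pair $j$ used, relabel pair $j$'s type onto pair $i$, unless the result is present). A routine case analysis---slightly delicate because a full pair intersects every non-absent pair---shows each preserves the intersecting property and the cardinality, so the extremal $\mathcal F$ may be assumed closed under all of them. Fix the last pair and split $\mathcal F$ by its type there into $\mathcal F^{0},\mathcal F^{F},\mathcal F^{+},\mathcal F^{-}$; deleting pair $n$ identifies these with subfamilies, respectively, of $\mathcal H^{(p,s)}(n-1)$, $\mathcal H^{(p-1,s)}(n-1)$, $\mathcal H^{(p,s-1)}(n-1)$, $\mathcal H^{(p,s-1)}(n-1)$. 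The trace of $\mathcal F^{0}$ is intersecting with no further work; for the other three, closedness under the shifts lets one move pair $n$'s type onto a free pair, and since two members that meet \emph{only} at pair $n$ together touch at most $2(p+s)-1$ pairs, a pair absent in both is available as soon as $n\ge 2p+2s$, which makes those traces intersecting as well. The displayed identity localizes in the same way,
\[
|\mathcal H^{(p,s)}(n)_x| = |\mathcal H^{(p,s)}(n-1)_x| + |\mathcal H^{(p-1,s)}(n-1)_x| + 2\,|\mathcal H^{(p,s-1)}(n-1)_x|,
\]
so bounding each of the four traces by a star of the appropriate family---via the inductive hypothesis, which applies to $(p-1,s,n-1)$ and to $(p,s-1,n-1)$, and also to $(p,s,n-1)$ once $n-1\ge 2p+2s$---yields $|\mathcal F|\le|\mathcal H^{(p,s)}(n)_{a_1}|$. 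For $s=1$ the half-type traces fall into the already-settled $s=0$ slice and the recursion otherwise stays at $s=1$, terminating at $n=2p+1=2p+s$.

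The step I expect to be the real obstacle is the trace of $\mathcal F^{0}$ in the boundary case $n=2p+2s$ with $s\ge 2$: there $\mathcal H^{(p,s)}(n-1)=\mathcal H^{(p,s)}(2p+2s-1)$ lies in the range $2p+s\le n-1<2p+2s$ for which the Erd\H{o}s--Ko--Rado property is not yet known---this is exactly the part of Conjecture~\ref{fkconj} left open by Theorem~\ref{fkthm}---so the inductive hypothesis does not cover it. One must therefore either give a self-contained argument for $n=2p+2s$, or prove a cross-intersecting strengthening---bounding $|\mathcal F^{0}|$ jointly with $|\mathcal F^{+}|$ and $|\mathcal F^{-}|$, exploiting that the trace of $\mathcal F^{0}$ cross-intersects the other three and that the traces of $\mathcal F^{+}$ and $\mathcal F^{-}$ cross-intersect---that remains valid throughout $2p+s\le n<2p+2s$. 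That is precisely where the hypotheses $n\ge 2p+2s$, $n=2p+s$, and $s=1$ are being spent, and precisely the obstruction one must defeat to settle the full conjecture.
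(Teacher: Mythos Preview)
The paper does not give a separate proof of this theorem; it is quoted as a prior result of Fuentes and Kamat. What the paper actually proves is the full conjecture (Theorem~\ref{mainthm}), and it does so by a Katona-style cycle method on a ``double cycle'' $D_n=\mathbb Z_n\times\{0,1\}$: one writes down explicit \emph{quasi-intervals} (one family of $n$ of them when $s$ is even, a family of $2n$ when $s$ is odd), checks by a direct pairing that an intersecting subfamily of quasi-intervals has size at most $(2p+s)/2$ resp.\ $2p+s$, and then averages over all edge-respecting bijections $V(M_n)\to D_n$. This is a completely different mechanism from your compression/induction scheme, and it handles every $n\ge 2p+s$ uniformly with no boundary case.

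Your involution argument for $n=2p+s$ is clean and correct, and your $s=1$ recursion closes because the half-type traces land in the $s=0$ slice while the $\mathcal F^{0}$ trace at the bottom step $n=2p+2$ lands at $n-1=2p+1=2p+s$, which you have already handled. The genuine gap is precisely the one you flag in your last paragraph, and it is fatal for the theorem as stated: for $s\ge 2$ the clause ``$n\ge 2p+2s$'' includes the value $n=2p+2s$, and there your bound on $|\mathcal F^{0}|$ requires the EKR property for $\mathcal H^{(p,s)}(2p+2s-1)$, which satisfies none of the three hypotheses of the theorem. So the induction as written establishes only $n\ge 2p+2s+1$ (together with $n=2p+s$ and $s=1$), not $n\ge 2p+2s$. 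Closing the gap along your lines would require an independent argument at $n=2p+2s$---say the cross-intersecting inequality you gesture at---but you do not provide one; the paper avoids the whole issue by abandoning induction for the cycle method.
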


They also observed that the case $s=0$ is equivalent to the classic Erd\H{o}s--Ko--Rado theorem, while the case $p=0$ is an EKR-type result for intersecting families of independent sets in $M_n$, proved independently by Deza-Frankl \cite{df1983} and Bollob\'as-Leader \cite{bl1997}. The latter pair of authors applied this result to a diameter problem in the grid. Conjecture \ref{fkconj} does not hold for $n<2p+s$, since in that case $\mathcal{H}^{(p,s)}(n)$ is an intersecting family in its entirety.

The main result of this note is proving Conjecture \ref{fkconj}.

\section{Main result}

\begin{theorem} \label{mainthm}
For non-negative integers $p,s$ satisfying $1\le 2p+s\le n$, $\mathcal{H}^{(p,s)}(n)$ is EKR.
\end{theorem}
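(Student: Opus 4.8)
First I would recast the problem on a fixed $2n$-element ground set. Identify $V(M_n)$ with $\Omega=[n]\times\{a,b\}$, the matching being the pairs $\{(i,a),(i,b)\}$; then $\mathcal H^{(p,s)}(n)$ is exactly the family of those $(2p+s)$-subsets of $\Omega$ containing precisely $p$ of these pairs, and ``intersecting'' becomes ordinary intersection in $\Omega$. It is convenient to encode a member as a word in $\{0,a,b,*\}^n$ with exactly $p$ letters $*$ (``full columns'') and exactly $s$ letters in $\{a,b\}$ (``half columns''); two members intersect iff at some coordinate both are nonzero and either at least one of them is $*$ or the two letters coincide. The automorphism group is the hyperoctahedral group $S_n\ltimes(\mathbb Z_2)^n$, which acts transitively on $\Omega$, so all stars have the same size, and splitting a star according to whether the distinguished vertex lies in a full or a half column gives
$$|(\mathcal H^{(p,s)}(n))_{a_1}|=\binom{n-1}{p-1}\binom{n-p}{s}2^{s}+\binom{n-1}{p}\binom{n-1-p}{s-1}2^{s-1},$$
a quantity I will call $f_{p,s}(n)$; the same split shows $f_{p,s}(n)=|\mathcal H^{(p-1,s)}(n-1)|+|\mathcal H^{(p,s-1)}(n-1)|$.

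The plan is to argue by induction on $n$, the base case $n=2p+s$ being Theorem~\ref{fkthm}. Given an intersecting $\mathcal F\subseteq\mathcal H^{(p,s)}(n)$ with $n>2p+s$, partition it according to the content of column $n$ into $\mathcal F_0,\mathcal F_a,\mathcal F_b,\mathcal F_*$ and delete column $n$, obtaining via injective projections families $\mathcal A_0\subseteq\mathcal H^{(p,s)}(n-1)$, $\mathcal A_a,\mathcal A_b\subseteq\mathcal H^{(p,s-1)}(n-1)$ and $\mathcal A_*\subseteq\mathcal H^{(p-1,s)}(n-1)$, all regarded as families of subsets of $[n-1]\times\{a,b\}$. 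Tracing the intersecting condition: $\mathcal A_0$ is intersecting; each of $(\mathcal A_0,\mathcal A_a)$, $(\mathcal A_0,\mathcal A_b)$, $(\mathcal A_0,\mathcal A_*)$, $(\mathcal A_a,\mathcal A_b)$ is cross-intersecting; and there is \emph{no} further restriction on, or between, $\mathcal A_a$, $\mathcal A_b$, $\mathcal A_*$, since a $*$ in column $n$ absorbs everything. Thus it suffices to prove, under exactly these constraints and knowing $|\mathcal A_0|\le f_{p,s}(n-1)$ from the inductive hypothesis,
$$|\mathcal A_0|+|\mathcal A_a|+|\mathcal A_b|+|\mathcal A_*|\le f_{p,s}(n),$$
with equality forcing $\mathcal F$ to be a star.

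The heart of the proof is therefore a package of cross-intersecting estimates between the families $\mathcal H^{(p',s')}(n-1)$ for the few nearby $(p',s')$, valid in the full range $2p+s\le n$. When $\mathcal A_0=\emptyset$ the required bound reduces to: two cross-intersecting subfamilies of $\mathcal H^{(p,s-1)}(n-1)$ have total size at most $|\mathcal H^{(p,s-1)}(n-1)|$, which together with the trivial $|\mathcal A_*|\le|\mathcal H^{(p-1,s)}(n-1)|$ is exactly $f_{p,s}(n)$. When $\mathcal A_0\neq\emptyset$ one instead plays its size off against the others: a near-maximal intersecting $\mathcal A_0$ must, by induction together with a Hilton--Milner-type stability statement, be concentrated at a single vertex $y$, which forces $\mathcal A_a,\mathcal A_b,\mathcal A_*$ to lie inside stars at $y$, so that $\mathcal F$ is essentially a star; the regime of medium-sized $\mathcal A_0$ is then handled by interpolating the two extreme bounds. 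To make these cross-intersecting inequalities manageable I would first compress $\mathcal F$: for $i<j$, the operation that swaps the contents of columns $i$ and $j$ of a member whenever column $i$ is empty and column $j$ is not preserves $\mathcal H^{(p,s)}(n)$ (it changes neither the number of $*$'s nor the number of letters) and, by the usual exchange argument — which needs a short check precisely because of the wildcard $*$ — preserves the intersecting property, so after stabilizing we may assume $\mathcal F$, and hence each $\mathcal A_\bullet$, is compressed.

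I expect essentially all of the work to be in this last package of cross-intersecting bounds for shifted families and in pinning down their extremal configurations, and this is also where the sharp hypothesis $n\ge 2p+s$ enters: it is exactly the threshold below which $\mathcal H^{(p,s)}(n)$ and the relevant smaller families stop containing disjoint pairs at all, so below it every cross-intersecting estimate becomes trivial in the wrong direction and the conclusion genuinely fails. A Katona-style double count over the $2n$-element ground set $\Omega$ is a conceivable alternative to the induction, but calibrating the weights so that every member of $\mathcal H^{(p,s)}(n)$ is counted uniformly while intersecting sub-collections stay bounded by $f_{p,s}(n)$ seems to run into the very same estimates.
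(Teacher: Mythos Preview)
Your proposal is an outline of an inductive strategy, not a proof: the entire difficulty has been pushed into an unproved ``package of cross-intersecting estimates'' and an unproved ``Hilton--Milner-type stability statement'' for the families $\mathcal H^{(p',s')}(n-1)$. Concretely, even your easiest case $\mathcal A_0=\emptyset$ rests on the claim that cross-intersecting $\mathcal A_a,\mathcal A_b\subseteq\mathcal H^{(p,s-1)}(n-1)$ satisfy $|\mathcal A_a|+|\mathcal A_b|\le|\mathcal H^{(p,s-1)}(n-1)|$; this is not a known theorem for these mixed families, and the obvious complementation trick does not stay inside $\mathcal H^{(p,s-1)}(n-1)$ (the complement of a $(p,s-1)$-set is an $(n-1-p-s+1,\,s-1)$-set). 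The $\mathcal A_0\neq\emptyset$ regime is worse: you invoke stability to pin $\mathcal A_0$ to a star, but no Hilton--Milner analogue for $\mathcal H^{(p,s)}$ is available, and your ``interpolating the two extreme bounds'' for medium $|\mathcal A_0|$ would require defect cross-intersecting inequalities that look at least as hard as the theorem itself. The shifting you propose is also delicate here, since the column-swap you describe can change which column carries a $*$ versus a letter, and the ``short check'' that intersection survives in the presence of wildcards is not carried out.

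More to the point, your final paragraph dismisses the Katona-style approach as running into the same estimates, and that is precisely where the paper succeeds. The paper does \emph{not} induct or use any cross-intersecting machinery; it defines, on the double cycle $D_n=\mathbb Z_n\times\{0,1\}$, a single orbit (two orbits when $s$ is odd) of ``quasi-intervals'' of size $2p+s$ --- essentially two arcs of lengths $p+\lfloor s/2\rfloor$ and $p+\lceil s/2\rceil$, staggered so that consecutive quasi-intervals overlap --- for which one checks directly that at most $2p+s$ of them (out of $2n$) can be pairwise intersecting. A uniform double count over proper bijections $V(M_n)\to D_n$ respecting the matching then gives $|\mathcal F|\le\frac{2p+s}{2n}|\mathcal H^{(p,s)}(n)|$ in one line. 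The calibration you worried about is handled by symmetry: every $H\in\mathcal H^{(p,s)}(n)$ is mapped to every quasi-interval by the same number of proper bijections. So the route you set aside is exactly the one that works, and cleanly.
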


\begin{proof}
All sets of $\mathcal{H}^{(p,s)}(n)$ are of size $2p+s$ and all vertices of $M_n$ are included in the same number of sets of $\mathcal{H}^{(p,s)}(n)$. This implies that the number of sets of $\mathcal{H}^{(p,s)}(n)$ containing any given vertex $x$ is $$|\mathcal{H}_x^{(p,s)}(n)|=\frac{2p+s}{2n}\cdot |\mathcal{H}^{(p,s)}(n)|.$$
We will show that the above quantity is an upper bound for the size of any intersecting subfamily of $\mathcal{H}^{(p,s)}(n)$, therefore $\mathcal{H}^{(p,s)}(n)$ is EKR.

Let $\mathbb{Z}_n$ denote the $n$-element cyclic group with elements $\{0,1, \dots, n-1\}$. We define the double cycle $D_n$ as the set $\{(x,y)~:~x\in \mathbb{Z}_n,~y\in\{0,1\}\}$. A function $\varphi: V(M_n)\rightarrow D_n$ is called a {\it proper mapping} if it is a bijection that maps $a_i$ and $b_i$ into two elements of $D_n$ with the same first coordinate for all $i=1,2, \dots n$.

Our proof follows the idea of Fuentes and Kamat \cite{fk2024}. They used a variant of Katona's cycle method, originally developed to give an elegant proof \cite{k1972} of the Erd\H{o}s--Ko--Rado theorem \cite{ekr1961}. The plan is to define subsets of size $2p+s$ called {\it quasi-intervals} in $D_n$, then bound the size of an intersecting family formed by them. Then we will use this result and the notion of proper mappings to give an upper bound on the size of intersecting families in $\mathcal{H}^{(p,s)}(n)$. We consider two cases based on the parity of $s$.

{\bf Case 1:} $s=2k$. We define $n$ subsets of $D_n$ called quasi-intervals as follows. For $i=0,1, \dots, n-1$, let
$$B_i=\{(i-k,0), (i-k+1,0), \dots, (i+p-1,0)\}\cup \{(i,1), (i+1,1), \dots, (i+p+k-1,1)\}$$

Note that all quasi-intervals are of size $2(p+k)=2p+s$. The quasi-intervals intersecting a given $B_i$ are $\{B_{i-p-k+1}, B_{i-p-k+2}, \dots B_{i-1}, B_{i+1}, B_{i+2}, \dots B_{i+p+k-1}\}$, where the indexing of the quasi-intervals is considered modulo $n$. Also observe that the sets $B_{i-p-k+c}$ and $B_{i+c}$ are disjoint for all $c=1,2, \dots, p+k-1$. This implies that the size of an intersecting family formed by quasi-intervals is at most $p+k$.

Let $\mathcal{F}\subset \mathcal{H}^{(p,s)}(n)$ be an intersecting family. Let $S$ denote the number of pairs $(H, \varphi)$ where $H\in\mathcal{F}$, $\varphi$ is a proper mapping and $\varphi(H)$ is a quasi-interval. Let $f(n,p,k)$ denote the number of proper mappings that take a given $H\in\mathcal{H}^{(p,s)}(n)$ to a given quasi-interval $B_i$. By symmetry, $f(n,p,k)$ does not depend on the choice of $H$ and $B_i$. Since any proper mapping takes exactly one set of $\mathcal{H}^{(p,s)}(n)$ to $B_0$, the number of proper mappings is $|\mathcal{H}^{(p,s)}(n)|\cdot f(n,p,s)$.

We can calculate $S$ by first choosing $H\in\mathcal{F}$, then a quasi-interval $B_i$ and finally a proper mapping taking $H$ to $B_i$. This gives us
$$S=|\mathcal{F}|\cdot n \cdot f(n,p,s).$$

On the other hand, we can give an upper bound on $S$ by first choosing $\varphi$ then $B_i$, for which we have at most $p+k$ choices since the quasi-intervals in $\varphi(\mathcal{F})$ form an intersecting family. Therefore
$$S\le |\mathcal{H}^{(p,s)}(n)|\cdot f(n,p,s)\cdot (p+k).$$
After rearranging,

$$|\mathcal{F}|\le \frac{p+k}{n}\cdot|\mathcal{H}^{(p,s)}(n)|=\frac{2p+s}{2n}\cdot|\mathcal{H}^{(p,s)}(n)|.$$

{\bf Case 2:} $s=2k+1$. In this case we will define $2n$ quasi-intervals as follows. For $i=0,1, \dots, n-1$, let
$$B_{2i}=\{(i-k-1,0), (i-k,0), \dots, (i+p-1,0)\}\cup \{(i,1), (i+1,1), \dots, (i+p+k-1,1)\}$$
$$B_{2i+1}=\{(i-k,0), (i-k+1,0), \dots, (i+p-1,0)\}\cup \{(i,1), (i+1,1), \dots, (i+p+k,1)\}$$

Note that all quasi-intervals are of size $(p+k)+(p+k+1)=2p+s$. The quasi-intervals intersecting a given $B_i$ are $\{B_{i-2p-2k}, B_{i-2p-2k+1}, \dots B_{i-1}, B_{i+1}, B_{i+2}, \dots B_{i+2p+2k}\}$, where the indexing of the quasi-intervals is considered modulo $2n$. Also observe that the sets $B_{i-2p-2k-1+c}$ and $B_{i+c}$ are disjoint for all $c=1,2, \dots, 2p+2k$. This implies that the size of an intersecting family formed by quasi-intervals is at most $2p+2k+1=2p+s$.

Using the same double counting argument to finish the proof, it follows that
$$|\mathcal{F}|\le \frac{2p+s}{2n}\cdot|\mathcal{H}^{(p,s)}(n)|.$$

\end{proof}


\end{document}